\newcommand{\ZZ}{{\mathbb Z}}
\newcommand{\RR}{{\mathbb R}}
\newtheorem{theorem}{Theorem}
\newtheorem{lemma}{Lemma}[section]
\newtheorem{prop}[lemma]{Proposition}
\newtheorem{coro}[lemma]{Corollary}
\newtheorem{definition}[lemma]{Definition}
\newtheorem{remark}[lemma]{Remark}
\newtheorem{example}[theorem]{Example}
\newcommand{\CE}{{\mathcal{E}}}
\newcommand{\CD}{{\mathcal{D}}}
\newcounter{smalllist}
\newcommand{\vL}{\varLambda}
\newcommand{\ts}{\hspace{0.5pt}}
\newcommand{\Hmm}[1]{\leavevmode{\marginpar{\tiny%
$\hbox to 0mm{\hspace*{-0.5mm}$\leftarrow$\hss}%
\vcenter{\vrule depth 0.1mm height 0.1mm width \the\marginparwidth}%
\hbox to 0mm{\hss$\rightarrow$\hspace*{-0.5mm}}$\\\relax\raggedright
#1}}}
\begin{document}
\title[Diffusion on Delone sets]{Diffusion on Delone sets}

\author[]{Sebastian Haeseler$^1$}
\author[]{Xueping Huang$^2$}
\author[]{Daniel Lenz$^3$}
\author[]{Felix Pogorzelski$^4$}

%\author[]{$^1$}
\address{$^1$ Mathematisches Institut, Friedrich Schiller Universit\"at Jena,
  D-07743 Jena, Germany, sebastian.haeseler@uni-jena.de}
\address{$^2$ Department of Mathematics, Nanjing University of Information Science and Technology, 210044 China, \mbox{hxp@nuist.edu.cn} }
\address{$^3$ Mathematisches Institut, Friedrich Schiller Universit\"at Jena,
  D-07743 Jena, Germany, daniel.lenz@uni-jena.de,\\
  URL: http://www.analysis-lenz.uni-jena.de/ }
\address{$^4$ Technion Israel Institute of Technology Haifa,
  IL-32000 Haifa, Israel, \mbox{felixp@technion.ac.il}}

%\address{$^4$ Mathematisches Institut, Friedrich Schiller Universit\"at Jena,
%  D- 07743 Jena, Germany, felix.pogorzelski@uni-jena.de.}

\begin{abstract}
We consider  graphs associated to Delone sets in Euclidean space.
Such graphs arise in various ways from tilings. Here, we  provide a
unified framework. In this context, we study the associated
Laplace operators and show Gaussian heat kernel bounds for their
semigroups. These results apply to both metric and discrete graphs.
\end{abstract}

\maketitle

%\tableofcontents

\section*{Introduction}
In this article we initiate the study of metric graphs associated to
Delone sets in arbitrary dimensions.

Delone sets  are well-spaced  subsets  of Euclidean space in the
sense that their points have a minimal distance to each other and at
the same time do not admit arbitrarily large holes. They have been of
interest in geometry for a long time.  In recent years they have
attracted substantial attention in mathematical physics as they can
serve as models for the atomic positions of solids.

In particular, Delone sets (with additional regularity features)
play a key role in modeling quasicrystals \cite{Lag,LP}. In this
context, both diffraction of Delone sets, see e.g.\@ the survey
\cite{BL}, and the quantum mechanical treatment of models based on
Delone sets %{\color{blue}
   {are} of interest, see e.g.\@ the survey \cite{DEG}. The
quantum mechanical treatment is based on the Schroedinger equation.
Accordingly, spectral theory of Schroedinger operators i.e.\@
operators of the form Laplacian plus a potential, has been a main
topic in the investigation of quasicrystals. While the spectral
theory is understood in impressive detail in the one-dimensional
case \cite{DEG}, this is not the case for the higher dimensional
situation. In that case, the operator algebras associated to
quasicrystals have been intensively studied, resulting in a deep
understanding of their K-theory, see the survey  \cite{KP},  as well
as in investigations centered around an averaged quantity called
integrated density of states. Also, some results on generic singular
continuous spectra are known \cite{LS} and  recent years have
witnessed  developments in the study of certain product models on
lattices, \cite{DGS}. However, as far as the spectral theory of
quasicrystal operators  in  Euclidean space of dimension bigger than
one is concerned it seems fair to say that the basic picture is
rather unclear. Even the spectral properties of discrete Laplacians
alone are not well understood on such basic examples as the Penrose
lattice. This may even be seen as one of the most important open
questions in this field.

Here, we look at such Laplace type operators  from a somewhat
different point of view, i.e.\@ from the point of view of diffusion.
It turns out that in this respect the situation is much more
accessible and the operators are very comparable to the usual
Laplacian  on Euclidean space. In fact, our main results,
Theorem~\ref{thm-gaussian} and Theorem~\ref{thm-gaussian-disc}, give
Gaussian estimates for rather general classes of operators
associated to arbitrary Delone sets. This generalizes an earlier
result of Telcs \cite{Tel} for the Penrose tiling. At the same time
this also generalizes parts of the results of Pang \cite{Pang} on
the square lattice.   {(}Note, however, that the main thrust of
\cite{Pang} is on scaling of the square lattice, which is not
addressed in the present paper.)

The first step in our investigation is to associate graphs to Delone
sets. In concrete situations such graphs are already present because
there is a tiling structure which   {either can}  be seen as a graph
structure or    {gives} rise to a graph structure in natural way. To
deal with the general situation, we introduce the concept of
neighbor relations for   {an} arbitrary Delone   {set}. A pair
consisting of a neighbor relation and a Delone set then gives rise
to a graph based on the Delone set. In this construction, the
well-spacing of Delone sets is reflected in the arising graphs being
roughly isometric to $\RR^N$,   {see} Corollary
\ref{Rough-isometry}. All of this is discussed in Section
\ref{sec-Delone}. In Section \ref{sec-Tiling} we then show that
every Delone set admits a canonical neighbor relation. This is based
on a study of the Voronoi construction. There, we also discuss how
tilings and CW-complexes fit into our framework. From a conceptual
point of view  Section \ref{sec-Delone} and Section \ref{sec-Tiling}
are rather relevant as they set up the framework to study, which may
be of interest for further studies as well, and show how earlier
examples fit into the framework.

Our  main  specific results are then discussed in Section
\ref{sec-Metric} and Section \ref{sec-discrete} respectively. More
specifically,  Section \ref{sec-Metric} deals with metric graphs and
Section \ref{sec-discrete} deals with discrete graphs. Both metric
and discrete graphs can be seen as natural candidates for a  study
of diffusion on Delone sets.  While discrete graphs have - at least
implicitly - been around in this context,
the above mentioned
investigation of the operator algebras and  the integrated density
of states for metric graphs associated to Delone sets does not seem
to have been considered before. This is rather remarkable as metric graphs
and their associated operators have attracted attention as models
for various kinds of random operators, see e.g. \cite{AGA,BCFK} for
recent collections dealing with metric graphs and their features in
a variety of cases. As it stands our results are then the first
results on Laplacians on metric graphs associated to aperiodic
order. As far as methods go, for both the case of discrete graphs
and of metric graphs our main result follows from local regularity
features combined with a main theorem from Barlow / Bass / Kumagai
\cite{BBK}. In fact, for metric graphs we have to work a bit harder
to show the necessary local regularity, which is automatically
satisfied in the discrete case.

\bigskip

\textbf{Acknowledgements.} The authors gratefully acknowledge
financial support from DFG. D.L. would also like to thank Peter
Stollmann and Ivan Veseli\'{c} for delightful discussions on quantum
graphs. X. H. was partially supported by The Startup Foundation for
Introducing Talent of NUIST. F.P.\@ was supported in part by
a Technion Fine fellowship.

\section{Delone sets, neighbor relations,  and the associated
graphs}\label{sec-Delone} In this section we introduce the basic
objects of our study. Delone sets are subsets of Euclidean space
with certain uniform spacing properties. We will associate graphs to
Delone sets. The vertices of the graphs will be the points of the
Delone sets. The edges will be defined by an additional piece of
information. To store this information we introduce the concept of
neighbor relations.

\bigskip

Our basic setup is as follows: We consider subsets of Euclidean
space $\RR^N$. The Euclidean norm is denoted by $\| \cdot\|$ and the
closed ball around the origin $0$ with radius $S$ is denoted by
$B_S$ and the open ball around the origin $0$ with radius $S$ by
$U_S$. The Euclidean metric on $\RR^N$ is denoted by $d$, i.e.
$$d(x,y) := \|x- y\|.$$

The  closed line  segment $[x,y]$  between two points $x,y\in \RR^N$
is defined by
$$[x,y] :=\{ x + t (y-x) : t\in [0,1]\}$$
and the open line segment $]x,y[$ between two points $x,y\in\RR^N$
is defined by
$$]x,y[ :=\{ x + t (y-x) : t\in (0,1)\}.$$
 The Lebesgue measure of a measurable subset of $\RR^N$ is denoted
by $|M|$ and the cardinality of a set $F$ is denoted by $\sharp F$.

\medskip

\begin{definition} Let $\vL$ be a subset of $\RR^N$. Then,  $\vL$ is called
  uniformly discrete if there exists $r>0$ with
$$\|x-y\|\geq 2 r$$
 for all
  $x,y\in \vL$ with $x\neq y$.  The set $\vL$ is called relatively
  dense if  there exists  $R>0$ with
$$\RR^N =\bigcup_{x\in \vL} ( x + B_R ).$$
  If $\vL$ is both uniformly discrete (with parameter $r$)  and
  relatively dense (with parameter $R$)  it is
  called a Delone set or an  $(r,R)$ - Delone set.
\end{definition}

\begin{remark} If  $\vL$ is uniformly discrete with parameter $r$, then  open
balls around  points of $\vL$ with radius $r$ are disjoint.  This is
the  reason for the factor $2$ appearing in the above definition.
The largest $r$ with this property is called the packing radius of
$\vL$.  On the other hand if  $\vL$ is relatively dense with
parameter $R$, then no point of $\RR^N$ has distance larger than $R$
to $\vL$. Then, the smallest $R$  with this property is called the
covering radius of $\vL$.
\end{remark}

As mentioned already we need an additional piece of information in
order to define graphs.

\begin{definition}[Neighbor relation]
A neighbor relation  on a Delone set $\vL$ is a subset $\mathcal{N}$
of $\vL\times \vL$ satisfying the following conditions for some
$S>0$:

\begin{itemize}
\item[(N0)] $\mathcal{N}$ is symmetric (i.e. $(x,y)\in \vL$ if and only if
$(y,x)\in\vL$) and contains the diagonal $\{ (x,x) : x\in\vL\}$.
\item[(N1)] $\|x - y\|\leq  S$ for all $(x,y)\in \mathcal{N}$.
\item[(N2)] For arbitrary $x,y\in \vL$  there exists a sequence
$(x_0, \ldots, x_n)$ in $\vL$ with $x_0 = x$, $x_n = y$ and $(x_i,
x_{i+1})\in\mathcal{N}$, $i = 0,\ldots, n$ and
$$ \{x_0,\ldots, x_n\}  \subset [x,y] + B_{S}.$$
\end{itemize}
The number $S$ is called the parameter of the neighbor relation. We
write $x\sim y$ if $(x,y)\in\mathcal{N}$ and then say that  $x$ and
$y$ are neighbors.
\end{definition}

\begin{remark}[Parameters] Note that any $(r,R)$-Delone set is an
$(r',R')$-Delone set as well for any $0 < r' \leq r$ and $R \leq
R'$. Similarly, a neighbor relation to parameter $S$ is a neighbor
relation to any parameter $S'$ with $S' >S$.
\end{remark}

Part (N2) of the definition is crucial. It gives a precise way of
saying that a point in $\vL$ has neighbors in 'all directions'. This
will enable us to show that the resulting graphs are  roughly
isometric to the ambient space.  In the next section we will show
that any Delone set admits a neighbor relation. More specifically,
we will show  that
$$\mathcal{N}_{max} (\vL):=\{ (x,y)\in \vL \times \vL : \|x - y\|\leq
2 R\}$$ is a neighbor relation (with parameter $2R$). By definition
this is the maximal neighbor relation to this parameter in the sense
that it contains any other such  such neighbor relation.

In this section we proceed by developing the theory of neighbor
relations and the associated graphs. In  particular, we will show
how in our setting there is a certain uniformity to all sorts of
quantities.  Let $\mathcal{N}$ be a neighbor relation on $\vL$. We
then define for  each $x\in \vL$ the  \textit{degree} of $x$ as
$$deg (x):= \sharp \{ y \in\vL : y\neq x \mbox{ and }  y\sim x\}.$$

\begin{prop}[Uniform bounds]\label{prop-uniform}
Let $\vL$ be an $(r,R)$-Delone set and $\mathcal{N}$ a neigbor
relation with parameter $S$ on it.
\begin{itemize}

\item[(a)] We have
$ 2r \leq \|x - y\|\leq S$ for all $(x,y)\in \mathcal{N}$ with
$x\neq y$.
\item[(b)] There is an $D\geq 0$ with $\mbox{deg} \leq D$.

\end{itemize}

\end{prop}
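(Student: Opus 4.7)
The plan is to treat the two parts separately. Part (a) should follow by directly unwinding the definitions: the upper bound $\|x-y\| \leq S$ is precisely condition (N1) on the neighbor relation, while the lower bound $\|x-y\| \geq 2r$ for distinct $x,y \in \vL$ is just the uniform discreteness of $\vL$ with parameter $r$. So part (a) requires no real work.

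For part (b), I would use a volume packing argument. Fix $x \in \vL$ and consider any neighbor $y \in \vL$ of $x$ with $y \neq x$. By (N1), $y$ lies in $x + B_S$. Uniform discreteness of $\vL$ implies that the open balls $y + U_r$, as $y$ ranges over $\vL$, are pairwise disjoint; and for $y$ with $\|y-x\|\leq S$, each such ball is contained in $x + B_{S+r}$. Summing the Lebesgue volumes of these disjoint balls gives
\[
\deg(x)\cdot |U_r| \leq |B_{S+r}|,
\]
and since the volume of a ball in $\RR^N$ scales as the $N$-th power of its radius, we obtain $\deg(x) \leq ((S+r)/r)^N$. Setting $D := ((S+r)/r)^N$ yields a uniform bound independent of $x \in \vL$.

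There is no real obstacle in either step. The only minor point worth being careful about in (b) is that the packing balls $y + U_r$ need not be contained in $x + B_S$, but only in the slightly larger ball $x + B_{S+r}$; this is what produces the factor $(S+r)^N$ in the numerator rather than $S^N$. The bound on $D$ depends only on the parameters $r$ of the Delone set and $S$ of the neighbor relation, and on the dimension $N$, which is the kind of uniform, parameter-dependent estimate the proposition is asking for.
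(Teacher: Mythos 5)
Your proposal is correct and follows essentially the same route as the paper: part (a) by unwinding (N1) and uniform discreteness, and part (b) by the volume packing argument with disjoint $r$-balls around neighbors contained in $x + B_{S+r}$, yielding the bound $|B_{S+r}|/|B_r| = ((S+r)/r)^N$. The care you take about enlarging $B_S$ to $B_{S+r}$ matches the paper exactly.
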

\begin{proof} (a)  By $(x,y)\in\mathcal{N}\subset \vL\times \vL$  we have $2 r \leq \|x - y\|\leq
S$.

\smallskip

(b) By (a), we can bound $deg(x)$ by
$$deg (x) \leq \sharp (x + B_{S}) \cap \vL.$$
As $\vL$ is an $(r,R)$ Delone set,  open  balls of radius $r$ around
different points of $\vL$ are disjoint and  we can bound the latter
quantity to obtain
$$  \sharp(x + B_S ) \cap \vL  \leq  \frac{|B_{S + r}|}{|B_r|}.$$
This finishes the proof.
\end{proof}

Consider now a pair $(\vL,\mathcal{N})$ consisting of an
$(r,R)$-Delone set $\vL$ and a neighbor relation $\mathcal{N}$. We
can then associate a combinatorial graph $G_c(\vL,\mathcal{N})$ as
follows: The vertices of $G_c (\vL,\mathcal{N})$ are given by the
elements of $\vL$ and the edges of $G_c (\vL,\mathcal{N})$ are
exactly the pairs $(x,y) \in\mathcal{N}$.  Note that the degree of
an $x\in \vL$ defined above is exactly the degree of the vertex $x$
in the graph $G_c (\vL,\mathcal{N})$. In particular, the graph $G_c
(\vL,\mathcal{N})$ has uniformly bounded degree.  For $G_c
(\vL,\mathcal{N})$ we call a finite sequence $\gamma = (x_0,\ldots,
x_n)$ of vertices a \textit{path of length $n$ (between $x_0$ and
$x_n$)} if $x_i\sim x_{i+1}$, $i = 0,\ldots, n$. Then, by (N2),  the
graph $G_c (\vL,\mathcal{N})$ is connected, i.e.\@ between all $x,y\in
\vL$  there exists a path. As the graph $G_c (\vL,\mathcal{N})$ is
connected we can  define   the \textit{combinatorial metric} $d_c$
on $\vL$ by
$$d_c (x,y):=\inf \{ n : \mbox{ there is path of length $n$ between
$x$ and $y$}\}.$$ Finally, the vertex set $\vL$ of the graph $G_c
(\vL,\mathcal{N})$ is naturally equipped with the $\sigma$-algebra
of all of its subsets and the  canonical measure $\mu_c$ given by
$$\mu_c (S):= \sharp S$$
for $S\subset \vL$. In this way $(G_c (\vL, \mathcal{N}), d_c,
\mu_c)$ is a locally compact, complete  metric space with  measure
$\mu_c$.

\medskip

We can also associate to $(\vL,\mathcal{N})$ the \textit{metric
graph} $G_m (\vL,\mathcal{N}) $, which arises by `gluing' in
intervals between $x,y\in \vL$ with $(x,y) \in \mathcal{N}$. Roughly
speaking we will glue in the intervals of the form   $]x,y[$ and we
will consider intervals between different vertices as disjoint.
Specifically, we define
$$G_m (\vL,\mathcal{N}):= S(\vL, \mathcal{N}) / \sim,$$
with
$$S(\vL,\mathcal{N}) :=\{ (s,(x,y))\in \RR^N\times \mathcal{N}  : s\in
[x,y]\}$$ and the relation $\sim$ given by
$$(s,(x,y))\sim (s,(y,x)) \mbox{ and } (x,(x,y)) \sim (x,(x,z))$$
for all $(x,y),(x,z)\in \mathcal{N}$ and all $s\in [x,y]$. The class
of an element $(s,(x,y))$ is denoted by $[(s,(x,y))]$.

The elements of the form $[(x,(x,y))]$ are then called the
\textit{vertices} of $G_m (\vL,\mathcal{N})$ and the sets
$$e_{(x,y)}:=\{[(s,(x,y))] : s\in ]x,y[\}$$
with $(x,y)\in\mathcal{N}$ are called \textit{metric edges}. The
length of the metric edge $e_{(x,y)}$, denoted by $l(x,y)$, is
defined as
$$l(x,y) :=\|x-y\|.$$
Note that the metric edge $e_{(x,y)}$  is canonically homeomorphic
and in fact isometrically isomorphic to the open interval
$]0,l(x,y)[$ in $\RR$. We can introduce a metric on $G_m
(\vL,\mathcal{N})$ as follows: Let $a,b\in G_m (\vL,\mathcal{N})$ be
given. Then, a sequence $(x_0, \ldots, x_{n+1})$ of vertices is
called an \textit{admissible path} joining $a$ and $b$ if
$(x_j,x_{j+1})\in \mathcal{N}$ for $j=0,\ldots, n$ and $a = [(s,
(x_0,x_1))]$ and $b = [(t,(x_n, x_{n+1})]$ with suitable (unique)
$s,t\in \RR^N$. We then define the \textit{length} of such a path
$l(a,b,x_0,\ldots, x_{n+1})$ as
$$l(a,b,x_0,\ldots, x_{n+1}) := \| s- x_1\|  +\sum_{j=1}^{n-1} |x_{j} -
x_{j+1}| + \|x_n - t\|$$ and set
$$d_m (a,b):=\inf l(a,b,x_0,\ldots, x_{n+1}),$$
where the infimum is taken over all admissible paths joining $a$ and
$b$. Then, it is not hard to see that $d_m$ defines a metric on $G_m
(\vL,\mathcal{N})$. In this way, $G_m (\vL,\mathcal{N})$ becomes a
metric space.  There is a uniform upper bound on the lengths of
these metric edges (by the subsequent Lemma \ref{lem-equvialence}).
Note that we also have a uniform lower bound on the lengths of the
edges as well as an uniform upper bound  on the degree by
Proposition \ref{prop-uniform}.  Given the uniform bounds on the
edge lengths and on the degree it is not hard to see that $G_m
(\vL,\mathcal{N})$ is a complete metric space. Moreover, $G_m
(\vL,\mathcal{N})$ is locally compact. Clearly, the Lebesgue measure
on the edges  induces a measure on the graph $G_m
(\vL,\mathcal{N})$. This measure will be denoted by $\mu_m$.

\begin{remark} Of course, there is a close relationship between the
two graphs $G_c (\vL,\mathcal{N})$ and $G_m (\vL,\mathcal{N})$. One
can think of $G_c (\vL,\mathcal{N})$ as a  combinatorial graph
underlying $G_m (\vL,\mathcal{N})$. Conversely, one can think of
$G_m (\vL,\mathcal{N})$ as arising from $G_c (\vL,\mathcal{N})$ by
'gluing in'  intervals of length $l(e)$ at each edge $e$.
\end{remark}

\medskip

Recall that two metrics $e$ and $e'$  on the same space are called
\textit{equivalent} if there exists a $c>0$ with
$$ \frac{1}{c} e' \leq e \leq c \; e'.$$

\begin{lemma}[Equivalence of distances]\label{lem-equvialence} Let
$\vL$ be a  Delone set with neighbor relation $\mathcal{N}$. The
metrics $d, d_c$ and $d_m$ are all equivalent on $\vL$.
\end{lemma}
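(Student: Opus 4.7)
The plan is to prove three comparisons that together yield mutual equivalence of $d$, $d_c$, $d_m$ on $\vL$. Fix the parameters: $\vL$ is $(r,R)$-Delone, and $\mathcal{N}$ has parameter $S$. The easy comparison is between $d_m$ and $d_c$: by Proposition \ref{prop-uniform}(a), every edge of the metric graph has Euclidean length in $[2r,S]$. For vertices $x,y\in\vL$, the distance $d_m(x,y)$ is the infimum of $\sum_i \|x_i - x_{i+1}\|$ over combinatorial paths $x=x_0,\ldots,x_n=y$. Using the combinatorial shortest path as a candidate gives $d_m(x,y)\leq S\cdot d_c(x,y)$, while taking a metric-minimising path shows it must have at least $d_c(x,y)$ edges, each of Euclidean length $\geq 2r$, hence $2r\cdot d_c(x,y)\leq d_m(x,y)$.

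Next, comparing $d_m$ with the Euclidean distance $d$ in one direction is immediate: along any admissible path joining $x,y\in\vL$, the triangle inequality gives $\sum\|x_i - x_{i+1}\|\geq\|x-y\|$, so $d(x,y)\leq d_m(x,y)$.

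The main step is the reverse bound $d_c(x,y)\leq C\cdot d(x,y)$ for a constant $C=C(N,r,S)$. Here one invokes the key axiom (N2) to produce a path $x=x_0,\ldots,x_n=y$ with all $x_i\in [x,y]+B_S$; removing repetitions we may assume the $x_i$ are distinct, so $n+1\leq \sharp(\vL\cap([x,y]+B_S))$. The $(r,R)$-Delone property gives disjoint open balls $x_i + U_r \subset [x,y]+B_{S+r}$, and the capsule $[x,y]+B_{S+r}$ has volume bounded by
$$|[x,y]+B_{S+r}|\ \leq\ C_1\|x-y\| + C_2,$$
with $C_1,C_2$ depending only on $N,r,S$ (explicitly, $C_1 = |B^{N-1}_{S+r}|$ and $C_2 = |B_{S+r}|$ for $N\geq 2$). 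Comparing volumes yields $n+1\leq (C_1\|x-y\|+C_2)/|B_r|$. Since $\|x-y\|\geq 2r$ whenever $x\neq y$, the additive constant can be absorbed, giving $d_c(x,y)\leq n\leq C\|x-y\|$.

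Combining the three estimates chains into mutual equivalence: $d\leq d_m\leq S d_c\leq SC\,d$ and $2r d_c\leq d_m\leq S d_c$. The one substantive obstacle is the last step---translating the geometric containment from (N2) into a linear bound on the combinatorial path length---for which the capsule packing argument is decisive, and the only subtlety is handling the additive constant in the volume estimate by using the uniform discreteness lower bound $\|x-y\|\geq 2r$.
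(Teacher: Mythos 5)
Your proposal is correct and follows essentially the same route as the paper: the easy bounds $d\leq d_m\leq S\,d_c$ plus the packing argument in the capsule $[x,y]+B_{S+r}$, using (N2) and disjointness of the radius-$r$ balls, with the additive volume constant absorbed via $\|x-y\|\geq 2r$. The extra lower bound $2r\,d_c\leq d_m$ is not needed but is a harmless addition.
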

\begin{proof} Chose $r,R,S>0$ such that $\vL$ is an $(r,R)$-Delone
set and $S$ is a parameter for $\mathcal{N}$. Let  $x,y\in \vL$  be
given. Then, we clearly have
$$d(x,y) \leq d_m (x,y) \mbox{ and } d_m (x,y) \leq S d_c (x,y).$$
We are now heading to providing an estimate of $d_c (x,y)$ in terms
of $d(x,y)$. Without loss of generality we assume $x\neq y$. By
(N2), there exists  a sequence $(x_0, \ldots, x_n)$ in $\vL$ with
$x_0 = x$, $x_n = y$ and $x_i \sim x_{i+1}$, $i = 0,\ldots, n$ and
$$ \{x_0,\ldots, x_n\} \subset [x,y] + B_{S}.$$
This gives
$$d_c (x,y) \leq n \leq \vL \cap ( [x,y] +
B_{S}).$$ As the open balls with radius $r$ around different points
of $\vL$ are disjoint we can bound the latter term by
$$...  \leq
\frac{ |[x,y] + B_{S+r}|}{|U_r|}.$$ To estimate that last term we
use that
$$|[x,y] + B_{S+r}| = \| y - x\| \sigma_{N} + |B_{S+r}|$$
with the volume $\sigma_N$ of the $(S+r)$-ball in $\RR^{N-1}$. As $\|y -
x\|\geq 2 r$ for $x\neq y$ we find that
$$|[x,y] + B_{S+r}|  \leq \|y - x\| \left(\sigma_N +\frac{1}{2r} |B_{S+r}|\right).$$
Putting this together we arrive at
$$d_c (x,y) \leq C d (x,y)$$
with $C = \frac{ \sigma_N + \frac{1}{2r}|B_{S+r}|}{|U_r|}$.
\end{proof}

Two metric measure spaces $(X_j, e_j, \mu_j)$, $j=1,2$ are called
\textit{roughly isometric} if there exists a map $\phi :
 X_1\longrightarrow X_2$ as well as $\varrho>0$ and $\delta>1$ with

\begin{itemize}

\item $\bigcup_{x\in X_1} B_{e_2}(\phi(x), \varrho) = X_2$,

\item $\frac{1}{\delta}  (e_1 (x,y) - \varrho) \leq e_2 (\phi (x), \phi
(y)) \leq \delta ( e_1 (x,y) + \varrho)$ for all $x,y\in X_1$,

\item $\frac{1}{\delta} \mu_1 (B_{e_1}(x,\varrho)) \leq \mu_2
(B_{e_2}(\phi (x), \varrho)) \leq \delta \mu_1 (B_{e_1}(x,\varrho))$
for all $x\in X_1$,
\end{itemize}
where $B_{e_j} (z, s)$ denotes the ball in $X_j$ around $z$ with
radius $s$ with respect to $e_j$.   {It is well known that rough
isometry is an equivalence relation for metric measure spaces.}

\smallskip

\begin{coro}[Rough isometry]\label{Rough-isometry}
The metric measure spaces $(\RR^N,d,  \lambda)$, $(G_c(\vL,\mathcal{N}), d_c, \mu_c)$
and $(G_m (\vL,\mathcal{N}), d_m, \mu_m)$ are roughly isometric.
\end{coro}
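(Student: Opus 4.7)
The plan is to invoke the transitivity of rough isometry noted after the definition and exhibit two explicit rough isometries, from $(G_c(\vL,\mathcal{N}), d_c, \mu_c)$ to $(\RR^N, d, \lambda)$ and to $(G_m(\vL,\mathcal{N}), d_m, \mu_m)$, both realized by the obvious inclusions of $\vL$.

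For the first map, take $\phi \colon \vL \to \RR^N$ to be the set inclusion and set $\varrho := R$. Relative density of $\vL$ gives coverage. The distance condition is a strengthened form of what is required: by Lemma \ref{lem-equvialence} there is a constant $c \geq 1$ with $c^{-1} d_c(x,y) \leq d(\phi(x),\phi(y)) \leq c\, d_c(x,y)$ for all $x, y \in \vL$, so any $\delta \geq c$ works. For the measure condition, $\lambda(B_d(\phi(x),R)) = |B_R|$ is a fixed positive constant, while $\mu_c(B_{d_c}(x,R))$ lies between $1$ and $|B_{cR+r}|/|B_r|$: the upper bound uses Lemma \ref{lem-equvialence} to place $B_{d_c}(x,R)$ inside $\vL \cap B_d(x,cR)$, and then the packing argument from the proof of Proposition \ref{prop-uniform}. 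A sufficiently large $\delta$ then bounds both sides.

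For the second map, take $\psi \colon \vL \to G_m(\vL,\mathcal{N})$ to be the vertex inclusion and set $\varrho := S$. Every point of $G_m$ is either a vertex or lies on a metric edge of length at most $S$ by Proposition \ref{prop-uniform}(a), hence within $d_m$-distance $S$ of some vertex; this yields coverage. The distance condition on $\vL$ is again Lemma \ref{lem-equvialence}. The measure condition requires the most care: $\mu_c(B_{d_c}(x,S))$ is bounded above and below as in the first map, while $\mu_m(B_{d_m}(\psi(x),S))$ is bounded below by $2r$ (the length of any edge incident to $x$, which exists by (N2) applied to $x$ and any other point of $\vL$), and bounded above by $S$ times the number of edges incident to the vertices in $B_{d_m}(\psi(x),2S) \cap \vL$, a finite quantity by the packing count and the uniform degree bound $D$ from Proposition \ref{prop-uniform}(b). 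This last upper bound is the only mildly technical step, since edges may only be partially contained in the ball; but the edge-length bound $S$ and the finite count of endpoint vertices handle it. Chaining the two rough isometries by transitivity finishes the proof.
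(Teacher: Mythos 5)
Your proof is correct and follows the same route the paper takes: the paper's own proof is simply the one-line remark that the corollary ``follows easily from Lemma~\ref{lem-equvialence}'', and your argument is exactly the detailed verification behind that remark --- inclusion maps, the metric equivalence from Lemma~\ref{lem-equvialence} for the distance condition, relative density and the edge-length bound for coverage, and packing plus the degree bound for the uniform two-sided measure estimates. Your careful treatment of the $\mu_m$-ball volume bounds supplies details the paper leaves implicit, but the approach is the same.
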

\begin{proof} This follows easily from Lemma \ref{lem-equvialence}.
\end{proof}

\section{Tiling systems and existence of neighbor relations}
\label{sec-Tiling} In this section we  show existence of neighbor
relations for arbitrary Delone sets. More specifically, we present two
general and canonical ways of associating a neighbor relation to a
Delone set. These will be based on the  Voronoi construction and
the concept of tiling systems. We also show how suitable
decompositions of Euclidean space as a CW-complex  give rise to a
Delone set with a neighbor relation. By the considerations of the
previous section we then obtain in all these cases  both  a
combinatorial and a metric graph.

\bigskip

\begin{definition}[Tiling system] A pair $(\vL, (V_x)_{x\in \vL})$
consisting of a set $\vL \subset \RR^N$ and a family $V_x$, $x\in \vL$,
of compact subsets of $\RR^N$ is called a tiling system with
parameters $r,R>0$ if the following conditions hold:
\begin{itemize}
\item $ (x+ B_r) \subset V_x \subset (x + B_R)$ for all $x\in \vL$.
\item $\mbox{int} (V_x) \cap \mbox{int} (V_y) = \emptyset$ whenever $x,y
\in\vL$ with $x\neq y$. (Here, $\mbox{int}$ denotes the interior of
a set.)
\item $\bigcup_{x\in \vL} V_x = \RR^N$.
\end{itemize}
A set $V_x$ is referred to as  tile with distinguished  point $x$.
\end{definition}

By construction the set $\vL$ appearing in the definition of a
tiling system with parameters $r,R >0$  is an $(r,R)$-Delone set.

\smallskip

Tiling systems canonically  offer the possibility to define neighbor
relations:

\begin{example}[Canonical neighbor relation]
For each tiling system $(\vL, (V_x)_{x\in\vL})$ we can define a
neighbor relation (with parameter $2R$) by
$$\mathcal{N}_{can} (\vL, (V_x)_{x\in \vL}):=\{(x,y) \in \vL \times \vL : V_x \cap V_y\neq
\emptyset\}.$$ Indeed, (N0) is obviously satisfied and due to  $V_x
\subset (x + B_R)$ for any $x\in \vL$ the condition $(N1)$ follows.
Finally, one can easily infer (N2) by
 considering for any $x,y\in\vL$ with $x\neq y$ the set
 $$\{z \in \vL : V_z \cap [x,y] \neq \emptyset\}.$$
\end{example}

\begin{example}[Maximal neighbor relation]
For each tiling system $(\vL, (V_x)_{x\in\vL})$ we can define the
neighbor relation (with parameter $2R$)
$$\mathcal{N}_{max} (\vL, (V_x)_{x\in\vL}):=\{(x,y) \in \vL \times \vL : \|x - y\|\leq 2 R\}.$$
Clearly, $\mathcal{N}_{max} (\vL, (V_x)_{x\in\vL})$  contains the
canonical neighbor relation. Thus, it must satisfy (N2) as well.
Moreover, it is not hard to see that  (N0) and (N1) clearly satisfied. So,
$\mathcal{N}_{max} (\vL, (V_x)_{x\in\vL}) $ is indeed a neighbor
relation. It is the maximal neighbor relation with parameter $2R$ in
the sense
that any other such neighbor relation must be a subset of it.
\end{example}

\begin{example}[Tilings with convex polytopes]
If $(\vL,(V_x)_{x\in\vL})$ is a tiling system where the $V_x$ are
convex polytopes, we can also define the neighbor relation
$\mathcal{N}$ to consist of those $(x,y)$ such that $V_x$ and $V_y$
share a non-trivial part of an $(N-1)$-dimensional surface. It
requires some care to show that this is indeed a neighbor relation.
A proof can be given as follows: The properties (N0) and (N1) are
clear. Thus, it remains to show (N2).  Let $x,y\in \vL$ with $x\neq
y$ be given. Then, with $S_r:= B_r \setminus U_r$ the set
$$U:= \{ s\in (x + S_r): s\in [x,w] \mbox{ for some $w\in (y +U_r)$}
\}$$ is an open subset of $x + S_r$ and hence a positive surface
measure. On the other hand the set
$$N:=\{ s\in (x + S_r) : \mbox{the line through $x$ and $s$ meets
a $k$-dimensional surface of  of $V_z$}\}$$ has measure zero for any
$k\leq N-2$ and $z\in\vL$. Hence,  there must exist a $w\in (y +
U_r)$ such that $[x,w]$ does not intersect any $k$-dimensional face
of {$V_z$} for $k \leq (N-2)$ and {$z\in \vL$}. This rather directly
implies (N2).
\end{example}

The previous examples show that a Delone set admitting a tiling
system always gives rise to a neighbor relation as well. Next we show that
any Delone set appears in a  tiling system. This tiling system is
defined via the  Voronoi construction. As a consequence it is even a
tiling system consisting of convex polytopes.

We start with a discussion  of the well known Voronoi construction.
Let $\vL$ be an $(r,R)$-set. To an arbitrary $x\in \vL$ we associate
the  Voronoi cell $V(x,\vL) \subset \RR^N$ defined by
\begin{eqnarray*}
 V(x,\vL)&:=& \{p\in \RR^N : \|p-x\|\leq \|p-y\|\;\: \mbox{for all $y\in \vL$ with
$y\neq x$}\}\\
&=& \bigcap_{y\in \vL,y\neq x} \{p\in \RR^N :  \| p -x \|\leq \| p
-y\|\}.
\end{eqnarray*}
Note that $$\{p\in \RR^N : \| p -x \|\leq \| p -y\|\}$$ is a
half-space. Thus, $V(x,\vL)$ is a convex set. Moreover, it is
obviously closed and bounded and therefore compact. It turns out
that $V(x,\vL)$ is already determined by the elements of $\vL$ close
to $x$. More specifically, we have  $$V(x,\vL)= \bigcap_{y\in
B(x,2R)} \{p\in \RR^N : \| p -x \|\leq \| p -y\|\}$$ by  Corollary
5.2 in \cite{Sen}. This implies that the Voronoi cells are convex
polytopes. Moreover, also the following holds.

\begin{lemma} \label{voronoi} Let $\vL$ be an $(r,R)$-set and $x\in
\vL$ be arbitrary. Then, the following holds.

\begin{itemize}

\item[(a)] $V(x,\vL)$ is contained in $B(x,R)$.

\item[(b)] $V(x,\vL)$ contains $B(x,r)$.

\end{itemize}

\end{lemma}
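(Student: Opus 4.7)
My plan is to prove both parts by direct application of the defining properties of an $(r,R)$-Delone set together with the characterization of $V(x,\vL)$ as an intersection of half-spaces.

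For part (a), I would start with an arbitrary $p\in V(x,\vL)$ and use the relative denseness of $\vL$ to find a witness point $y\in\vL$ with $\|p-y\|\leq R$ (such $y$ exists because $\RR^N = \bigcup_{z\in\vL}(z+B_R)$). By the defining inequality of the Voronoi cell, $\|p-x\|\leq \|p-y\|\leq R$, so $p\in B(x,R)$. The only mild subtlety is that one does not need $y\neq x$: the case $y=x$ already gives $\|p-x\|\leq R$ for free, so the argument is uniform.

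For part (b), I would take $p\in B(x,r)$, i.e.\ $\|p-x\|\leq r$, and show it satisfies every half-space defining $V(x,\vL)$. For any $y\in\vL$ with $y\neq x$, uniform discreteness yields $\|x-y\|\geq 2r$, so the triangle inequality gives
\[
  \|p-y\| \;\geq\; \|x-y\|-\|p-x\| \;\geq\; 2r - r \;=\; r \;\geq\; \|p-x\|.
\]
Since this holds for every $y\in\vL\setminus\{x\}$, the point $p$ lies in every half-space in the definition of $V(x,\vL)$ and therefore in $V(x,\vL)$.

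I do not expect a main obstacle; both statements are essentially direct translations of the two Delone parameters into the Voronoi framework. Part (a) is a one-line consequence of relative denseness and the definition of $V(x,\vL)$, while part (b) is a one-line consequence of uniform discreteness and the triangle inequality. No appeal to Corollary 5.2 in \cite{Sen} (which gives the polytope description) is needed for this lemma.
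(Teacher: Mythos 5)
Your proof is correct, and it is essentially the argument the paper has in mind: the paper simply outsources part (a) to Proposition 5.2 of \cite{Sen} and declares part (b) ``immediate from the construction,'' whereas you write out the underlying one-line arguments explicitly (relative denseness plus the defining inequality of the Voronoi cell for (a); uniform discreteness plus the triangle inequality for (b)). Both steps check out, including your observation that the case $y=x$ in (a) needs no separate treatment, so nothing is missing.
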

\begin{proof} (a) This follows from  Proposition 5.2 in \cite{Sen}.

\smallskip

(b) This is immediate from the construction.
\end{proof}

The following proposition is a direct consequence of the preceding
lemma.

\begin{prop} Let $\vL$ be an $(r,R)$-set. Then, $(\vL,
(V_x)_{x\in\vL})$ is a tiling system with parameters $r,R$.
\end{prop}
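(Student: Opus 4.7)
The plan is to verify the three defining conditions of a tiling system directly for the family of Voronoi cells $V_x := V(x,\vL)$, using the preceding Lemma \ref{voronoi} for the inclusions and standard metric reasoning for the remaining two.

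For the first condition, observe that $(x + B_r) \subset V_x \subset (x + B_R)$ is literally the content of Lemma \ref{voronoi}, since $B(x,R) = x + B_R$ and $B(x,r) = x + B_r$. This step is essentially free.

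For the second condition, I would argue that
$$\mathrm{int}(V_x) \subset \{p \in \RR^N : \|p-x\| < \|p-y\| \text{ for all } y \in \vL,\ y \neq x\}.$$
This inclusion follows because if $\|p-x\| = \|p-y\|$ for some $y \neq x$, then $p$ lies on the bounding hyperplane $\{q : \|q-x\| = \|q-y\|\}$ of one of the half-spaces cutting out $V_x$, and every neighborhood of $p$ contains points $q$ on the $y$-side of this hyperplane, which therefore lie outside $V_x$. If now $p \in \mathrm{int}(V_x) \cap \mathrm{int}(V_y)$ with $x \neq y$, we get $\|p-x\| < \|p-y\|$ and $\|p-y\| < \|p-x\|$ simultaneously, a contradiction.

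For the third condition, fix $p \in \RR^N$. By relative denseness, the set $\vL \cap (p + B_R)$ is nonempty, and by uniform discreteness it is finite, so the distance function $y \mapsto \|p - y\|$ attains its infimum on $\vL$ at some point $x \in \vL$. For any other $y \in \vL$, we then have $\|p - x\| \leq \|p - y\|$, i.e.\ $p \in V_x$. Hence $\bigcup_{x \in \vL} V_x = \RR^N$. The only mild subtlety across the argument is the interior characterisation used in condition two; the other two items reduce directly to Lemma \ref{voronoi} and the Delone property, so no genuine obstacle is expected.
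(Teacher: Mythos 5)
Your proof is correct and follows exactly the route the paper intends: the paper gives no proof beyond declaring the proposition ``a direct consequence of the preceding lemma,'' and your argument simply supplies the standard verifications --- Lemma \ref{voronoi} for the inclusions $(x+B_r)\subset V_x\subset (x+B_R)$, the perpendicular-bisector argument for disjointness of interiors, and attainment of the nearest-point distance (via relative denseness plus uniform discreteness) for the covering property. No gap; you have just written out what the authors left implicit.
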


By the previous proposition and the example with convex polytopes
discussed above, we infer that any Delone set $\vL$ comes with a
neighbor relation, the \textit{Voronoi neighbor relation},
$$\mathcal{N}_V (\vL) :=\{ (x,y)\in\vL\times \vL  :  \mbox{ $V_x$ and $V_y$
share a non-trivial part of an $(N-1)$-face}  \}.$$ Then, also the
bigger
$$\mathcal{N}_{max} (\vL) :=\{ (x,y)\in\vL\times \vL : \|x-y\| \leq 2R \}
$$
must be a neighbor
relation, called the \textit{maximal neighbor relation}.

\smallskip

We summarize the preceding considerations  in the following theorem.

\begin{theorem}[Existence of neighbor relations] Let $\vL$ be an $(r,R)$-Delone set. Then, both
$\mathcal{N}_V (\vL)$ and $\mathcal{N}_{max} (\vL)$ are neighbor
relations.
\end{theorem}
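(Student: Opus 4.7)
The plan is to deduce both statements from the Voronoi tiling system, which has just been shown to have parameters $r,R$, combined with the convex-polytope example developed earlier. Since each $V(x,\vL)$ is the intersection of finitely many half-spaces, it is a convex polytope, so the polytope example applies to the Voronoi tiling system and yields $\mathcal{N}_V(\vL)$ directly as a neighbor relation. The task is therefore to verify the three axioms (N0)--(N2) for $\mathcal{N}_V(\vL)$, and then deduce the result for $\mathcal{N}_{max}(\vL)$ from containment.

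For $\mathcal{N}_V(\vL)$, symmetry in (N0) is automatic from $V_x \cap V_y = V_y \cap V_x$ and the diagonal is included by convention. Axiom (N1) with parameter $S = 2R$ follows from Lemma~\ref{voronoi}(a): if $V_x$ and $V_y$ share a point $p$, then $\|x-y\| \leq \|x-p\| + \|p-y\| \leq 2R$. The main obstacle is (N2). Given $x\neq y$ in $\vL$, the idea is to travel along a segment from $x$ into the Voronoi cell of $y$ and record the sequence of cells met. Concretely, by Lemma~\ref{voronoi}(b) the ball $y+U_r$ is contained in $V_y$, and the set of directions from $x$ pointing into $y+U_r$ is an open subset of the sphere $x + S_r$ of positive $(N-1)$-dimensional surface measure. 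On the other hand, the set of directions along which the line through $x$ meets a face of some $V_z$ of dimension $\leq N-2$ is a countable union of lower-dimensional subsets of $x + S_r$, and hence has surface measure zero (this is exactly the reasoning used in the convex-polytope example and is the one step requiring real care, since it relies on local finiteness of the Voronoi decomposition to keep the countable union measurable). Hence some $w \in y + U_r$ can be chosen so that $[x,w]$ crosses only $(N-1)$-faces.

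Listing the Voronoi cells traversed by $[x,w]$ in order gives a sequence $V_{x_0} = V_x, V_{x_1}, \ldots, V_{x_n} = V_y$ in which consecutive cells share an $(N-1)$-dimensional piece of their common boundary, so $(x_i, x_{i+1}) \in \mathcal{N}_V(\vL)$. For the locality condition, each $V_{x_i}$ meets $[x,w]$ while being contained in $x_i + B_R$, and $[x,w] \subset [x,y] + B_r$, so $x_i \in [x,y] + B_{R+r} \subset [x,y] + B_{2R}$, which gives (N2) with $S = 2R$. Finally, for $\mathcal{N}_{max}(\vL)$, properties (N0) and (N1) are immediate from its defining inequality, and the containment $\mathcal{N}_V(\vL) \subset \mathcal{N}_{max}(\vL)$ (which holds precisely because (N1) for $\mathcal{N}_V$ gives $\|x-y\| \leq 2R$) means that any path witnessing (N2) for $\mathcal{N}_V$ also witnesses (N2) for $\mathcal{N}_{max}$. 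This completes the plan.
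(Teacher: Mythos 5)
Your proposal is correct and follows essentially the same route as the paper: it applies the convex-polytope example to the Voronoi tiling system (using Lemma~\ref{voronoi} for the inclusion $x+B_r\subset V_x\subset x+B_R$ and the measure-zero argument on the sphere $x+S_r$ to find a segment avoiding all faces of dimension $\le N-2$), and then obtains $\mathcal{N}_{max}(\vL)$ by containment. The only difference is that you spell out the ``rather directly implies (N2)'' step and the locality bound $x_i\in[x,y]+B_{R+r}\subset[x,y]+B_{2R}$ explicitly, which the paper leaves to the reader.
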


%\begin{remark} Both of the neighbor relations given in the previous
%proposition have their merits.
%\end{remark}

We finish this section by discussing a  related but slightly
different way of obtaining graphs from Delone sets via suitable
$CW$-complexes in Euclidean space.

\medskip

\begin{example}[CW-complexes in Euclidean space] Let a decomposition
of $\RR^N$ as a CW-complex be given such that the following
assumptions are satisfied:
\begin{itemize}

\item[  {(}C1)] Each cell is a convex polytope.
\item[(C2)] There exists a $\varrho>0$ such that each $N$-cell has
diameter at most $\varrho$.

\item[(C3)] There exists an $\gamma >0$ such that the $k$-dimensional surface  measure of any
$k$-cell is at least $\gamma$ for $k = 0,\ldots, N$.
\end{itemize}

Then, the $0$-skeleton $\vL$ of this complex is  $\gamma/2$ discrete
and $\varrho$ uniformly dense. Hence, it is a Delone set. We define
$$\mathcal{N}:=\{ (x,y) \in \vL\times \vL : \mbox{$x,y$ belong to
the same  $1$-dimensional cell}\}.$$ Then, $\mathcal{N}$ is a
neighbor relation with $S = \varrho$. Indeed, (N0) is clear and (N1)
follows as the length of a $1$-cell can not exceed the diameter of
any $N$-cell that it belongs to. It remains to show (N2): Let
$x,y\in \vL$ be given.  By (C2) any $N$-cell intersecting $[x,y]$ is
contained in $[x,y] + B_\varrho$. By construction any two points of
the $0$-skeleton belonging to the same $N$-cell are connected. Thus,
we infer (N2) with $S = \varrho$.
\end{example}

\section{Metric graphs over Delone sets and their
Laplacians}\label{sec-Metric} In this section we discuss the link
between metric graphs and the Dirichlet form of an associated
Laplacian. We will then also discuss some local regularity features
of the Dirichlet forms associated to metric graphs arising from
Delone sets. In this discussion we follow \cite{Hae} to which we
refer for further details and  proofs. We then go on and combine our
above considerations with \cite{BBK} to obtain the main result of
the paper.

\bigskip

Throughout this section we assume that we are given a Delone set
$\vL$ with a neighbor relation $\mathcal{N}$. We will be concerned
with the associated metric graph $(G_m (\vL, \mathcal{N}), \vL)$. In
our considerations we will need the Hilbert space $L^2 (G_m (\vL,
\mathcal{N}),\mu_m)$ of (equivalence classes of) real valued
functions on $G_m (\vL,\mathcal{N})$ with inner product
$$\langle u, v\rangle :=\int u v d\mu_m.$$

By construction each metric edge is isometrically isomorphic to an
open interval in $\RR$.  In the sequel we will then (tacitly)
identify the metric edge with this interval. This will allow us to
speak about weak differentiability and weak derivatives of functions
(by considering the corresponding functions on the open intervals).
In particular, whenever $u$ is a function on $G_m (\vL,\mathcal{N})$
which is weakly differentiable on each metric edge  we will denote
by $u'$ the derivative of $u$. Note that this derivative is not
defined on the  (countable many) branching points of  $G_m
(\vL,\mathcal{N})$.

By $W^{1,2} (G_m (\vL,\mathcal{N})  {)}$ we denote the vector space
of
 all continuous functions $u: G_m
(\vL,\mathcal{N})\longrightarrow \RR$ which are weakly
differentiable on each metric edge such that
$$\int |u(x)|^2  + |u' (x)|^2 d\mu_m  <\infty.$$
When equipped with the inner product
$$\langle u, v\rangle_{W^{1,2}}:=\int u v + u' v' d\mu_m$$
this becomes a Hilbert space.  The \textit{energy form} associated
to $G_m (\vL,\mathcal{N})$  is then given by $$ \CD:=\CD(\CE) =
W^{1,2} (G_m (\vL,\mathcal{N})), \;\: \CE(u,v) := \int u' v' d\mu_m
(x).$$ It is not hard to see that this is a closed symmetric form,
which is bounded below. The corresponding generator
$\Delta_{\vL,\mathcal{N}}$ is known as the \textit{Laplacian with
Kirchhoff boundary conditions} and for a function $u$ in the
operator domain, $\Delta_{\vL,\mathcal{N} }u=- u'' $ in a suitable
distributional sense, see e.g.  \cite{Hae} for further discussion.

The form $\CE$ is a
Dirichlet form, i.e.  whenever $u$ belongs to $\CD$ and $C :
\RR\longrightarrow\RR$ is a {\em normal contraction}
(meaning that $C$ satisfies $C(0) =0$ and $|C(x) - C(y)|\leq |x
- y|$ for all $x,y\in\RR$), then $C u$ belongs to $\CD$ as well and
$\CE (Cu, Cu)\leq \CE (u,u)$ holds.

The form has the regularity feature that $\CD \cap C_0 (G_m
(\vL,\mathcal{N}))$ is dense in  $\CD$ with respect to the form
norm. Indeed,
 chose an arbitrary  function $\varphi : \RR\longrightarrow [0,\infty)$,
 which is infinitely many times differentiable, supported in
 $[-2,2]$ and equal to $1$ in $[-1,1]$ and define
 $$\phi_n := \varphi (\frac{1}{n} d(p, \cdot))$$ for a fixed
 $p\in G_m (\vL,\mathcal{N})$. Then, a simple calculation shows that
 the $\phi_n u \in C_0(G_m(\Lambda,\mathcal{N}))$ converge to $u$ in
the form sense for any $u\in\CD$.

Thus, $\CE$ is a regular Dirichlet form.  Moreover,  $\CE$ is
strongly local in the sense that $\CE (u,v) = 0$ whenever $u$ is
constant on the support of $v$. In fact, the \textit{energy measure}
(which exists by abstract theory for any strongly local Dirichlet
form) can easily be seen to be given by
$$ d\Gamma(u(x)) = |u'(x)|^2 d\mu_m(x)$$
for $u\in\CD$ (cf. \cite{Hae}).

By the discussion in the previous section the graph $G_m
(\vL,\mathcal{N})$ has bounded geometry in the sense that the edge
lengths are uniformly bounded from below and the vertex degree is
uniformly bounded from above\footnote{By adding points on the edges one
can then automatically ensure that the edge lengths are bounded from
above as well.}. Hence, we can infer from \cite{Hae} the following
two local regularity properties, where we denote by $B_s (x)$ the
ball around $x$ with radius $s$ with respect to $d_m$.

\begin{prop}[Uniform local volume doubling]\label{VDl}
For any $L>0$ there  exists $\nu > 0$ such that for all $x\in G_m
(\vL,\mathcal{N})$ and all $s\in (0,L)$ we have
\[0< \mu_m(B_{2s}(x)) \leq 2^\nu \cdot \mu_m (B_s(x)) < \infty \label{VD}\tag{VD}.\]
\end{prop}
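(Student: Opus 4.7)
The argument rests on the uniform bounds from Proposition~\ref{prop-uniform}: every metric edge has length in $[2r, S]$ and every vertex has degree at most some $D$. I will split the range $(0, L)$ at the threshold $s_0 := r/4$ and handle each side separately.

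For small scales $s \leq s_0$, the condition $4s \leq r < 2r$ prevents $B_{2s}(x)$ from reaching two distinct vertices, so $B_{2s}(x)$ is contained in the star of edges around the single vertex $y$ closest to $x$ (possibly $x$ itself). Both $\mu_m(B_s(x))$ and $\mu_m(B_{2s}(x))$ are then explicit piecewise affine-linear functions of $s$ depending only on $\deg(y)$ and $d_m(x,y)$. A short case analysis (whether $x$ itself is a vertex, and whether $B_s(x)$ has already spilled past $y$) gives
$$\mu_m(B_{2s}(x)) \leq (D + 2)\,\mu_m(B_s(x)).$$

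For large scales $s_0 \leq s < L$, I will derive a uniform upper bound on $\mu_m(B_{2s}(x))$ and a uniform lower bound on $\mu_m(B_s(x))$. The lower bound follows from the small-scale analysis applied to radius $s_0$: $\mu_m(B_s(x)) \geq \mu_m(B_{s_0}(x)) \geq s_0$. For the upper bound, fix a vertex $x_0 \in \vL$ with $d_m(x, x_0) \leq S$; every edge meeting $B_{2s}(x)$ has an endpoint $y$ with $d_m(x_0, y) \leq 2L + 2S$, hence $\|x_0 - y\| \leq 2L + 2S$ (since $d \leq d_m$ on $\vL$, as used in the proof of Lemma~\ref{lem-equvialence}). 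Uniform discreteness then yields $\sharp(\vL \cap (x_0 + B_{2L + 2S})) \leq |B_{2L + 2S + r}|/|B_r|$, and each such vertex is incident to at most $D$ edges of length $\leq S$. This gives a constant upper bound $c(L)$, whence $\mu_m(B_{2s}(x)) / \mu_m(B_s(x)) \leq c(L)/s_0$.

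Combining the two regimes, the proposition holds with any $\nu$ satisfying $2^\nu \geq \max(D + 2,\,c(L)/s_0)$. The main obstacle is the bookkeeping in the small-scale case analysis: the explicit formulas for $\mu_m(B_s(x))$ and $\mu_m(B_{2s}(x))$ change depending on the relative sizes of $s$, $d_m(x, y)$, and the edge lengths incident to $y$, and one has to verify in each subcase that the resulting ratio stays bounded in terms of $D$ alone. The large-scale argument, by contrast, is a standard Delone packing estimate.
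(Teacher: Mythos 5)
Your argument is correct, but it is a genuinely different route from the one taken in the paper. The paper does not prove the volume doubling property directly: it observes that $G_m(\vL,\mathcal{N})$ has bounded geometry (edge lengths uniformly bounded below by $2r$ and above by $S$, degree uniformly bounded by $D$ via Proposition~\ref{prop-uniform}) and then simply imports Propositions~\ref{VDl} and~\ref{PIl} from the general metric-graph results of \cite{Hae}. You instead give a self-contained elementary proof, splitting at the scale $s_0=r/4$: below $s_0$ the ball $B_{2s}(x)$ meets at most one vertex (two vertices are at $d_m$-distance at least $2r$ apart, since $d\le d_m$ on $\vL$), so both measures are explicit affine functions of $s$ and the ratio is controlled by the degree (your bound $D+2$ checks out; in fact $(D+2)/2$ suffices, with the worst case at $d_m(x,y)=s$); above $s_0$ you trade the ratio for a uniform upper bound on $\mu_m(B_{2s}(x))$ via the Delone packing estimate and a uniform lower bound $\mu_m(B_s(x))\ge s_0$ inherited from the small-scale case. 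Both halves are sound; the only points worth spelling out in a full write-up are the uniqueness of the nearby vertex $y$ (which is where $4s\le r$ is used) and the fact that every vertex has degree at least one (which follows from connectedness via (N2)), so the lower bound $\mu_m(B_s(x))\ge s$ never degenerates. What your approach buys is independence from the external reference \cite{Hae} and explicit constants; what the paper's approach buys is brevity and the fact that the same citation simultaneously delivers the companion Poincar\'{e} inequality of Proposition~\ref{PIl}, which your method would not give without substantial extra work.
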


\begin{prop}[Uniform local Poincar\'{e} inequality]\label{PIl}
For any $L>0$  there exists $c_P
>0$ such that for all $0<s<L$ and $u\in W^{1,2} (B_s(x))$ we have
\[ \int\limits_{B_s(x)} |u(y) - \bar{u}_{B_s(x)}|^2 \: d  {\mu_m}(y) \leq c_P r^2 \int\limits_{B_s(x)} |u'(y)|^2 %hxp-revise
\:d  {\mu_m}(y)\label{PI}\tag{PI}.\] Here, $\bar{u}_{B_s (x)}$ is
the average of $u$ over $B_s (x)$ given by
$$\bar{u}_{B_s (x)}:=\frac{1}{\mu_m (B_s (x))} \int_{B_s (x)} u
d\mu_m.$$

\end{prop}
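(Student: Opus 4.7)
The approach is to reduce the uniform local Poincar\'e estimate on a ball $B_s(x)$ to a combination of the one-dimensional Poincar\'e--Wirtinger inequality on individual edges and the bounded combinatorial structure of such balls that comes from the Delone setup.

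First I would use Proposition~\ref{prop-uniform} and Lemma~\ref{lem-equvialence} to record the bounded-geometry data: every edge has length in $[2r,S]$ and every vertex has degree at most some $D$. Consequently, for any fixed $L>0$ and $s\leq L$, every ball $B_s(x)$ contains only a uniformly bounded (in $x$) number of vertices and edge segments, and there is a constant $C_1(L)$ with $\mu_m(B_s(x))\leq C_1(L)$. Since $G_m(\vL,\mathcal{N})$ is a length space, the ball $B_s(x)$ is geodesically convex: for each $y\in B_s(x)$ a shortest path $\gamma_y$ from $x$ to $y$ lies entirely in $B_s(x)$, has length $d_m(x,y)\leq s$, and traverses each metric edge at most once. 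For a continuous $u\in W^{1,2}(B_s(x))$, Cauchy--Schwarz along $\gamma_y$ yields
\[
|u(y)-u(x)|^2 \;\leq\; d_m(x,y) \int_{\gamma_y}|u'|^2\,dt \;\leq\; s\int_{B_s(x)}|u'|^2\,d\mu_m,
\]
and integrating in $y$ gives $\int_{B_s(x)}|u-u(x)|^2\,d\mu_m\leq C_1(L)\,s\int_{B_s(x)}|u'|^2\,d\mu_m$.

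To upgrade the linear factor $s$ to the required $s^2$, I would split into two regimes. For $s\leq r$, the ball $B_s(x)$ is either a subinterval of a single edge or a star of arcs meeting at one vertex; in both cases the one-dimensional Poincar\'e--Wirtinger inequality applied edge-by-edge (using the common vertex value as base point) immediately gives the bound with constant of order $1/\pi^2$. For $r\leq s\leq L$, the previous estimate gives $C_1(L)\,s\leq (C_1(L)/r)\,s^2$, which is absorbed into the final constant $c_P=c_P(L)$. The passage from $\|u-u(x)\|_{L^2}$ to $\|u-\bar u_{B_s(x)}\|_{L^2}$ only improves the constant, via the elementary Hilbert-space inequality $\|u-\bar u_{B_s(x)}\|_{L^2}^2\leq\|u-c\|_{L^2}^2$ valid for any constant $c$.

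The main obstacle is securing uniformity of $c_P$ in $x$ and handling the crossover between the small- and large-$s$ regimes. Uniformity rests entirely on the absolute parameters $r$, $S$, $D$ of the Delone/neighbor-relation pair, which enforce identical local bounds at every point. A subtler point, which is what makes the linear-in-$s$ estimate work, is that a geodesic in a metric graph traverses each edge at most once; this allows the one-dimensional path integral $\int_{\gamma_y}|u'|^2$ to be bounded by the \emph{full} ball integral rather than by a multiple of it depending on the combinatorial diameter of $B_s(x)$.
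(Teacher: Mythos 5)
Your argument is essentially correct, but it follows a genuinely different route from the paper: the paper does not prove Proposition~\ref{PIl} at all, it simply quotes it (together with the volume doubling property) from the bounded-geometry results of \cite{Hae}. Your self-contained proof rests on two sound observations: a shortest path $\gamma_y$ from the center $x$ to a point $y\in B_s(x)$ exists (the graph is a complete, locally compact length space), stays inside $B_s(x)$, and is injective, hence meets each metric edge in a set of multiplicity one, so that Cauchy--Schwarz along $\gamma_y$ gives $|u(y)-u(x)|^2\le s\int_{B_s(x)}|u'|^2\,d\mu_m$; the resulting linear-in-$s$ bound is then upgraded to a quadratic one by treating the regimes $s\le r$ and $r\le s\le L$ separately. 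This works, and in fact the case split can be avoided: since every edge has length at least $2r$, a ball of radius $s\le r$ contains at most one vertex and is a union of at most $D+1$ arcs of length at most $2s$, whence $\mu_m(B_s(x))\le 2(D+1)s$; combined with $\mu_m(B_s(x))\le C_1(L)\le (C_1(L)/r)\,s$ for $r\le s\le L$, one gets $\mu_m(B_s(x))\le C(L)\,s$ for all $0<s\le L$, and your first chain of inequalities already yields the $s^2$ bound uniformly. Three small corrections: the factor in the statement should be read as $c_P s^2$, not $c_P r^2$ (you implicitly, and correctly, do so); ``geodesically convex'' is not what you use or need --- metric balls in a graph are in general not geodesically convex, but they are star-shaped about their center, which suffices; and in the star case the relevant constant is not the Poincar\'e--Wirtinger constant $1/\pi^2$ (that applies when comparing to the mean on a single interval) but the order-one constant obtained by comparing to the value at the central vertex, which is all you need, after which the projection inequality $\|u-\bar u_{B_s(x)}\|_{L^2}\le\|u-c\|_{L^2}$ is correctly invoked. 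What your approach buys is a transparent proof with explicit constants in terms of $r$, $S$, $D$ and $L$; what the paper's citation of \cite{Hae} buys is a statement valid for general metric graphs with lower-bounded edge lengths and bounded degree, beyond the Delone setting.
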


Let now $e^{- t\Delta_{\vL,\mathcal{N}}}$, $t\geq 0$, be the
semigroup which is associated to the operator
$\Delta_{\vL,\mathcal{N}}$. Since $\CE$ is a Dirichlet form, this
semigroup is Markovian, i.e.
$$0 \leq e^{-t \Delta_{\vL,\mathcal{N}}} u  \leq 1$$
holds for any $u\in L^2 (G_m (\vL,\mathcal{N}))$ with $0 \leq u \leq
1$. Moreover,  in  \cite{Hae}  (see \cite{KLVW} as well) it was
shown that this semigroup has a kernel $p : (0,\infty) \times G_m
(\vL,\mathcal{N}) \times G_m (\vL,\mathcal{N})\longrightarrow
(0,\infty)$. Thus, we have
$$e^{-t \Delta_{\vL,\mathcal{N}}} u (x) = \int p_t (x,y) u (y)
d\mu_m (y)$$ for all $t>0$.

\begin{theorem}\label{thm-gaussian}
Let $\vL$ be a Delone set in $\RR^N$ with neighbor relation
$\mathcal{N}$ and $G_m (\vL,\mathcal{N})$ the associated  metric
graph. Let $\Delta_{\vL,\mathcal{N}}$ be the associated Laplacian.
Then, there exist $c_1, c_2,c_3, c_4>0$ such that the kernel $p$
satisfies
\[c_1 \frac{\exp(-c_2\frac{d(x,y)^2}{t})}{\mu_m (B_{\sqrt{t}} (x) ) } \leq p_t(x,y) \leq
c_3\frac{\exp(-c_4\frac{d(x,y)^2}{t})}{\mu_m(B_{\sqrt{t}}
(x))},\tag{GE}\label{gaussian}\] for all $x,y\in G_m
(\vL,\mathcal{N})$ and $t>0$.
\end{theorem}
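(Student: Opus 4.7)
The strategy is to invoke the main equivalence theorem of Barlow, Bass, and Kumagai \cite{BBK}, which for a strongly local regular Dirichlet form on a locally compact length space characterizes two-sided Gaussian heat kernel bounds of the form \eqref{gaussian} in terms of volume doubling (VD) and a scale-invariant Poincar\'{e} inequality (PI) holding uniformly on all scales. Since we have already established that $\CE$ is a strongly local, regular Dirichlet form with energy measure $d\Gamma(u) = |u'|^2 d\mu_m$, that $(G_m(\vL,\mathcal{N}), d_m, \mu_m)$ is a locally compact complete length space, and that the semigroup $e^{-t\Delta_{\vL,\mathcal{N}}}$ admits a continuous positive kernel $p_t$, the proof reduces to upgrading the uniform local statements of Propositions \ref{VDl} and \ref{PIl} from scales $s < L$ to all scales $s > 0$, and then citing \cite{BBK}.

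The first step is to extend volume doubling to all scales. On small scales this is Proposition \ref{VDl}. On scales exceeding some fixed $L$, I would use the rough isometry between $(G_m(\vL,\mathcal{N}), d_m, \mu_m)$ and $(\RR^N, d, \lambda)$ from Corollary \ref{Rough-isometry}, together with the uniform degree bound from Proposition \ref{prop-uniform} and the bi-Lipschitz equivalence of $d$, $d_c$, $d_m$ on $\vL$ from Lemma \ref{lem-equvialence}. These ingredients yield bounds $c_- s^N \leq \mu_m(B_s(x)) \leq c_+ s^N$ uniformly for $s \geq L$ and $x \in G_m(\vL,\mathcal{N})$, producing large-scale doubling with a uniform constant. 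Patching the small-scale and large-scale regimes gives global (VD).

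The second step, which I expect to be the main obstacle, is extending the Poincar\'{e} inequality to all scales. Poincar\'{e} is not a rough-isometry invariant in general, so one cannot simply pull (PI) back from $\RR^N$. Instead, I would proceed by a Whitney-type chaining argument: cover a large ball $B_s(x)$ by overlapping balls of fixed radius $\leq L$, on each of which the local (PI) of Proposition \ref{PIl} applies, and control the deviation of the average of a test function between neighboring small balls by telescoping. The combinatorics of the cover is regulated via the rough isometry to $\RR^N$, while the uniformly bounded edge lengths and vertex degrees ensure that the telescoping produces a uniform Poincar\'{e} constant rather than one that degenerates with $s$. This yields a scale-invariant (PI) on all scales.

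With both (VD) and (PI) uniformly in hand, the main theorem of \cite{BBK} delivers directly the two-sided Gaussian estimates \eqref{gaussian}, concluding the proof. If a sharper route is available it would likely be to observe that the combination of bounded geometry and rough isometry to Euclidean space pushes the parabolic Harnack inequality from $(\RR^N,d,\lambda)$ across the isometry (as in the spirit of Kanai / Delmotte-type stability results encompassed by \cite{BBK}), bypassing an explicit chaining derivation but relying on essentially the same geometric input.
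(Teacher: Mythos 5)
Your proposal follows essentially the same route as the paper: verify that $\CE$ is a strongly local regular Dirichlet form with the stated energy measure, invoke the uniform local volume doubling and Poincar\'e inequalities (Propositions \ref{VDl} and \ref{PIl}) together with the rough isometry of $(G_m(\vL,\mathcal{N}),d_m,\mu_m)$ to $(\RR^N,d,\lambda)$ (Corollary \ref{Rough-isometry}) to obtain (VD) and (PI) on all scales, and then conclude the two-sided Gaussian bounds. The only difference is one of bookkeeping: where you propose to carry out the large-scale upgrade of (PI) by an explicit Whitney-type chaining argument, the paper simply cites Theorem~2.21(a) of \cite{BBK}, which packages exactly this stability under rough isometry given the uniform local regularity, and it derives the final implication from (VD)+(PI) to the Gaussian estimates from Sturm's parabolic Harnack results \cite{St} (upper bound from his inequality~(4.4), lower bound from his Corollary~4.10) rather than from \cite{BBK} directly --- a route you yourself identify as the ``sharper'' one at the end.
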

\begin{proof} Given the local regularity properties discussed above
this follows from stability of corresponding heat kernel estimates
under rough isometries as discussed in \cite{BBK}. Here, the basic
idea behind this stability is that  volume doubling and Poincar\'{e}
inequality are stable under rough isometries. Now, by results of
Sturm \cite{St}   {v}olume doubling and Poincar\'{e} inequality are
equivalent to   {parabolic} Harnack inequality which in turns
implies the desired heat kernel estimates.  Here are the details for
our situation:

We note first that due to Corollary~\ref{Rough-isometry}, the space
$(G_m(\vL,\mathcal{N}),\mu_m,d_m)$ is roughly isometric to $(\RR^N,
\lambda,d)$. It follows from the Propositions~\ref{VDl}
and~\ref{PIl} that the underlying space $G_m(\vL, \mathcal{N})$
satisfies both the uniform local volume doubling property, as well
as the uniform local Poincar{\'e} inequality. This puts us in the
situation, where we can  apply Theorem~2.21~(a) from \cite{BBK} with
$X_1 = \mathbb{R}^N$ and $X_2 = G_m(\vL, \mathcal{N})$. Precisely,
we deduce from the latter result the full range volume doubling
property, as well as the global Poincar{\'e} inequality for $X_2$.
(Note that the statement in \cite{BBK} is stated in higher
generality. Our situation corresponds to the case $\beta_1 = \beta_2
= \overline{\beta} = 2$.) With this at hand, results from Sturm
\cite{St} on strongly local Dirichlet forms conclude the proof.
Indeed, noting that closed balls of finite $d_m$-radius are compact
(hypothesis~I(a) in \cite{St}), we have verified all assumptions
imposed in the just mentioned paper in order to obtain the desired
estimates on the heat kernel. Precisely, the Gaussian upper bound
follows from inequality~(4.4) in \cite{St} which was derived from
Theorem~4.1 of the same paper. The Gaussian lower bound is deduced
from Corollary~4.10 in \cite{St}. This finishes the proof.
\end{proof}

\begin{remark}
\begin{itemize}
\item The bounds on the heat kernel given in the preceding theorem
are known as \textit{Gaussian bounds}.

\item Note that no  further regularity conditions on the Delone set
are necessary. In particular, even Delone sets arising from corresponding
random processes will still satisfy the above Gaussian
bounds.

\item There is quite some stability to the argument. In fact, the
main ingredient is the rough isometry. This will also hold if, for
example,  the measure $\mu_m$ is replaced by the measure $h \mu$
with a measurable function $h : G_m (\vL,\mathcal{N})\longrightarrow
(0,\infty)$ being uniformly bounded away from zero and uniformly
bounded from above.

\item By applying this result to the metric graph with vertex set $\ZZ^2$ and edge set
$\{ \{tv+(1-t)w\}|\, 0 \leq t \leq 1 \hbox{ and }  v,w \in \ZZ^2,\,
|v-w|=1\}$ (which clearly fits into the framework of the example of
polytopal tilings) we recover the Gaussian bounds obtained by Pang
in \cite{Pang} by means of a completely different method. Note, however, that
the main thrust of \cite{Pang} is on the somewhat different issue of
scaling.

\item Our setup is based on Delone sets in Euclidean space (as this
is the most relevant situation from the point of view of
mathematical physics). However,  we would like to emphasize that
both  \cite{BBK} and \cite{St} are valid for rather general strongly
local Dirichlet forms. So, the above line of reasoning can be
generalized to substantially more general situations.

\end{itemize}
\end{remark}

\section{The discrete case}\label{sec-discrete}
In the preceding sections we have mainly been interested in the case
of metric graphs. However, one can also consider the discrete graph
associated to a Delone set and a neighbor relation. In this case,
one also obtains bounds on the corresponding semigroup by
essentially the same methods. This is discussed in this section.

\bigskip

Let $\vL$ be a Delone set and $\mathcal{N}$ a neighbor relation on
$\vL$. We consider the Hilbert space $\ell^2 (\vL) = L^2 (\vL,
\mu_c)$ consisting of square summable real valued functions on
$\vL$. The  discrete Laplacian $L_{\vL,\mathcal{N}}$ is the linear
operator defined on the whole $\ell^2 (\vL)$ via
$$L_{\vL,\mathcal{N}} u (x) :=\sum_{y\sim x} (u(x) - u(y)).$$
As the degree is uniformly bounded, this is a bounded operator. It
is a non-negative operator with form $\CE_c$ given by
$$\CE_c (u,v)= \frac{1}{2} \sum_{(x,y)\in \mathcal{N}} |u(x) -
u(y)|^2. $$ Clearly, this is a Dirichlet form  i.e. for any normal
contraction $C : \RR\longrightarrow \RR$ we have $\CE (Cu, Cu)\leq
\CE (u,u)$. Hence, the associated semigroup $e^{-t
L_{\vL,\mathcal{N} } }$ is Markovian i.e. satisfies $0 \leq e^{-t
L_{\vL,\mathcal{N}}} u \leq 1$ whenever $0\leq u \leq 1$ holds.
Since $\vL$ is a discrete set, the semigroup has a kernel $p :
(0,\infty)\times \vL\times \vL\longrightarrow (0,\infty)$.

\begin{theorem}\label{thm-gaussian-disc} Let $\vL$ be a Delone set in $\RR^N$ with neighbor relation
$\mathcal{N}$ and $G_c (\vL,\mathcal{N})$ the associated  discrete
graph. Let $L_{\vL,\mathcal{N}}$ be the associated Laplacian. Then,
there exist $c_1, c_2,c_3, c_4>0$ such that the kernel $p$ satisfies
\[c_1 \frac{\exp(-c_2\frac{d(x,y)^2}{t})}{\mu_c (B(x,\sqrt{t}))} \leq p_t(x,y) \leq
c_3\frac{\exp(-c_4\frac{d(x,y)^2}{t})}{\mu_c(B(x,\sqrt{t}))},\tag{GE}\label{gaussiandisc}\]
for all $x,y\in G_c (\vL,\mathcal{N})$ and $t>1\vee d_c (x,y)$.
\end{theorem}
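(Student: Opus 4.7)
The plan is to mirror the proof of Theorem~\ref{thm-gaussian} almost verbatim, with the discrete graph replacing the metric graph. Concretely I would proceed as follows: (i) use Corollary~\ref{Rough-isometry} to note that $(G_c(\vL,\mathcal{N}),d_c,\mu_c)$ is roughly isometric to $(\RR^N,d,\lambda)$; (ii) verify uniform local volume doubling and the uniform local Poincar{\'e} inequality on the discrete graph; (iii) invoke the stability theorem of \cite{BBK} to transfer full range volume doubling and the global Poincar{\'e} inequality from $\RR^N$ to $G_c(\vL,\mathcal{N})$; and (iv) apply the graph-theoretic characterization of the parabolic Harnack inequality and the resulting two-sided Gaussian heat kernel bounds, again from \cite{BBK}, in place of Sturm's theorem~\cite{St} that was used in the metric case.

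Step (ii), which constituted the main technical effort in the metric setting (Propositions~\ref{VDl} and~\ref{PIl}), is essentially for free here. By Proposition~\ref{prop-uniform}~(b) the vertex degree is uniformly bounded, so $d_c$-balls of bounded radius contain a uniformly bounded number of vertices, each of $\mu_c$-mass one. Uniform local volume doubling is then immediate. The uniform local Poincar{\'e} inequality reduces to a Poincar{\'e}-type estimate on finite sets of uniformly bounded cardinality, for which a uniform constant is readily extracted.

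With local regularity and rough isometry to $(\RR^N,d,\lambda)$ in hand, \cite{BBK} delivers the rest. Their stability result (Theorem~2.21~(a)), applied with $X_1 = \RR^N$ and $X_2 = G_c(\vL,\mathcal{N})$, yields full range volume doubling and the global Poincar{\'e} inequality on $G_c(\vL,\mathcal{N})$. Together with the characterization of the parabolic Harnack inequality in terms of volume doubling and Poincar{\'e} inequality for weighted graphs of bounded geometry, this implies the desired two-sided Gaussian estimate for $p_t$. The range $t > 1 \vee d_c(x,y)$ is intrinsic to the discrete setting: for smaller $t$, or for $d_c(x,y)$ large relative to $t$, the heat kernel on a graph is no longer comparable to a Gaussian because of combinatorial lattice effects, and the stated range is precisely the one covered by the graph heat kernel theorem in \cite{BBK}. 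No genuinely new obstacle arises beyond checking that our objects fit into their framework; the only mild difficulty is making sure the discrete-graph version of the stability theorem and of the implication from Harnack to Gaussian bounds are applied with the correct normalizations, which is essentially bookkeeping.
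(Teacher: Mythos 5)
Your steps (i)--(iii) coincide with the paper's own argument: rough isometry to $(\RR^N,d,\lambda)$ via Corollary~\ref{Rough-isometry}, the observation that uniformly bounded vertex degree makes local volume doubling and the local Poincar\'e inequality essentially automatic below a fixed scale (the paper points to the proof of Proposition~5.4~(a) in \cite{BBK} for the latter), and Theorem~2.21~(a) of \cite{BBK} applied to $X_1=(\RR^N,\lambda,d)$ and $X_2=G_c(\vL,\mathcal{N})$ to obtain full-range volume doubling and the global Poincar\'e inequality on the graph.

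The gap is in your step (iv). You attribute the passage from volume doubling plus Poincar\'e inequality to the two-sided estimate \eqref{gaussiandisc} to a ``graph heat kernel theorem in \cite{BBK}'' and describe the remaining work as bookkeeping. The paper instead imports this step from Delmotte \cite{Del} --- Theorem~3.8 there, assembled from Proposition~3.1 (on-diagonal bounds), Proposition~3.4 and inequality~(3.18) (Gaussian upper bound), and a chaining argument for the lower bound --- and, crucially, Delmotte's results are stated for \emph{discrete-time} Markov chains, whereas the kernel $p_t$ in the statement belongs to the continuous-time semigroup $e^{-tL_{\vL,\mathcal{N}}}$. The paper explicitly flags that one must check that Delmotte's line of argument carries over to continuous time and sketches how to do so. This discrete-time versus continuous-time adaptation is exactly where care is required; it is also the source of the restriction $t>1\vee d_c(x,y)$, which you correctly identify as intrinsic to graphs but misattribute to \cite{BBK}. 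Citing the stability theorem alone does not supply this final implication, so as written your proof is incomplete at precisely the step that distinguishes the discrete case from the metric one.
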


\begin{proof} As the main result of the previous section, this
is essentially  a consequence  from Theorem 2.21 (and the remark
following it)  in \cite{BBK}. This time, however, we have to combine
this with considerations of Delmotte \cite{Del} and in particular
Theorem 3.8 of \cite{Del}. This theorem  shows the discrete time
version of the estimate~\eqref{gaussiandisc}. However, the line of
argumentation of \cite{Del} carries over to the continuous time case
as well.  For completeness reasons we next briefly  sketch the
corresponding argument.

We note that $(\RR^N,\lambda,d)$ and $(\vL,\mu_c, d_c)$ with   are
roughly isometric. Since the graph $G_c (\vL,\mathcal{N})$
 has uniformly bounded degree all local regularity requirements are
 automatically satisfied. Indeed, below some fixed scale $c_0\ge 1$,  local
 Poincar\'{e} inequality always holds for  $G_c (\vL,\mathcal{N})$. The argument
is the same as the proof of Proposition 5.4 (a) in \cite{BBK}. By
Theorem 2.21 (a) of  \cite{BBK} applied to $X_1 = (\RR^N,\lambda,d)$
and $X_2 = G_c (\vL,\mathcal{N})$, we obtain that $G_c
(\vL,\mathcal{N})$ satisfies volume doubling and Poincar\'{e}
inequality. The heat kernel estimates then follow from Theorem 3.8
of \cite{Del}.  Indeed, the on-diagonal estimates were shown in
Proposition 3.1 of  \cite{Del}. The Gaussian type upper bound in the
space time range  $t>1\vee d_c (x,y)$ can be concluded from a
combination of Proposition 3.4,  (3.18) and the first part of the
proof of Theorem 3.8 (applied to the continuous time setting) in
\cite{Del}. The corresponding Gaussian type lower bound follows from
the on-diagonal lower bound and the volume doubling condition via a
chaining argument (e.g.  the second part of the  proof of Theorem
3.8 in \cite{Del} applied to the continuous time setting).
\end{proof}

\begin{remark}
\begin{itemize}
\item As in the case of metric graphs, the bounds on the heat kernel given in the preceding theorem
are known as \textit{Gaussian bounds}.

\item As the argument is basically  the same as in the case of metric graphs,
there is, again, quite some stability. For example,  we can allow
for a weight function  $$b :\mathcal{N} \longrightarrow (0,\infty)$$
with $b(x,y) = b(y,x)$  on the edges provided these weights are
uniformly bounded from above and uniformly bounded away from zero.
Similarly, we could replace the measure $m_c$ by any measure $h m_c$
by any $h$ which is uniformly bounded away from zero and uniformly
bounded from above. The resulting operator then acts by
$$L u (x) = \frac{1}{h(x)} \sum_{y\sim x} b(x,y) ( u(x) - u(y)).$$

\item As a specific instance of the previous part of the remark, we may chose
$$h : \vL\longrightarrow (0,\infty), h(x) := |V_x| \mbox{ and }
b(x,y) := d(x,y)^l$$ for some $l\in\ZZ$, whenever
we are given a Delone set $\vL$ (and
consider it with the canonical neighbor relation).

\end{itemize}
\end{remark}

\bibliographystyle{amsalpha}

\end{document}